\documentclass[12 pt,oneside,reqno,a4paper]{amsart} 
\usepackage{amsfonts,amssymb,amscd,amsmath,enumerate,verbatim,calc} 
\usepackage{float}
\usepackage{amsthm}
\newtheorem{theorem}{Theorem}[section]
\newtheorem{definition}{Definition}[section]

\newtheorem{lemma}[theorem]{Lemma}

\newtheorem{proposition}[theorem]{Proposition}
\newtheorem{remark}[theorem]{Remark}
\usepackage{mathtools}

\newtheorem{example}[theorem]{Example}
\numberwithin{equation}{section}

\usepackage{amsfonts}
\newcommand{\field}[1]{\mathbb{#1}}

                   \newcommand{\Z}{\field{Z}}

\textwidth = 15.5 cm 
\textheight = 26 cm 
\topmargin = -1cm 
\oddsidemargin = 1 cm 
\evensidemargin = 1 cm 
\pagestyle{plain} 
\begin{document}
	\title{multiplicative lie algebra structures on semi-direct product of groups}
	
	\author{Deepak Pal$^{1}$, Amit Kumar$^{2}$, Sumit Kumar Upadhyay$^{3}$ AND Seema Kushwaha$^{4}$\vspace{.4cm}\\
		{Department of Applied Sciences,\\ Indian Institute of Information Technology Allahabad\\Prayagraj, U. P., India} }

\begin{abstract}
The main aim of this paper is to determine the multiplicative lie algebra structures on the semi-direct product of an abelian group with a group under certain conditions.
\end{abstract}

	\maketitle

	\section{Introduction}
A multiplicative Lie algebra structure on a group $G$ is a function that satisfies the identities similar to the universal identities of the commutator function. We know that if $G$ is a non-cyclic group, then there are always at least two distinct multiplicative Lie algebra structures on $G$. Also, if $G = \mathbb{Z}_p^n $, where $p$ is a prime, then distinct multiplicative Lie algebra structures on $G$ can be determined by the classification problem of $n-$dimensional Lie algebra over the field $\mathbb{Z}_p$. Thus, the following are interesting problems: 

\textbf{Problem 1:} How many distinct (up to isomorphism) multiplicative Lie algebra structures exist on the group $G?$ 

\textbf{Problem 2:} Let $H$ be a subgroup of $G$ with a multiplicative Lie algebra structure $\star$ on $H$. Can we define a multiplicative Lie algebra structures $\tilde{\star}$ on $G$ with the help of $\star?$

In 2019, Walls (\cite{GLW}) investigated the construction of a multiplicative Lie algebra structure on $G$ (for details, one can see Theorem 3.7 of \cite{GLW}).  In \cite{MS}, Pandey and Upadhyay discussed Problem 1 and gave a precise characterization
of the group homomorphisms from the exterior square $G\wedge G$ to $G$ which determine a multiplicative Lie
algebra structure on $G$. They also found the number of distinct (up to isomorphism) multiplicative Lie algebra structures on some classes of finite groups like $D_n, Q_n $, etc. 

The main aim of this paper is to determine the multiplicative Lie algebra structures on the semi-direct product of groups motivated by Problem 2. More precisely, let $H$ be an abelian group with trivial multiplicative Lie algebra structure and $K$ be a multiplicative Lie algebra. Then with the help of multiplicative Lie algebra structure on $K$,  we define multiplicative Lie algebra structures on the semi-direct product $G$ of $H$ and $K$ such that $H$ is an ideal of $G$. In particular, 
if $G = H \times K$ and $(|H|, |K|) = 1$, then we see that every multiplicative Lie algebra structure on $G$ is completely determined by multiplicative Lie algebra structure on $K$. This method will help to determine all distinct multiplicative Lie algebra structures on a given group.  


	Now, we give few definitions and results  which are useful for the article.
	
\begin{definition}
A short exact sequence $$ {1}\longrightarrow  H\overset{\alpha} \longrightarrow G\overset{\beta} \longrightarrow  K \longrightarrow {1}$$ 
of multiplicative Lie algebras is called an extension of $H$ by $K.$ A map $t : K \to G$ is called a section of extension if $\beta \circ t = I_K$ and $t(1) = 1.$ 
\end{definition}
\begin{remark}[Proposition 3.4 (\cite{RLS})]
Let $H$ be an abelian group and $End(H)$ be the set of all group endomorphisms on $H$. Then $(End(H),\cdot, *)$ is a multiplicative Lie algebra, where $(F_1\cdot F_2)(h) = F_1(h)F_2(h)$ and $(F_1*F_2)(h)=F_1(F_2(h))F_2(F_1(h^{-1}))$.
\end{remark}
	 
	 
\section{Multiplicative lie algebra structures on semi-direct product}		
		Consider an extension $	{1}\longrightarrow  H\overset{i} \longrightarrow G\overset{\beta} \longrightarrow  K \longrightarrow {1}$ of $H$ by $K,$  where $H$ is an abelian group with trivial multiplicative Lie algebra structure and $K$ is a group with multiplicative Lie algebra structure $\star$. Let $t : K \to G $ be a section. Then by Remark $4.14$ (\cite{MS1}), the group operation $``\cdot"$ and the multiplicative Lie product $\tilde{\star}$ in $G$ are given by 
		\begin{align*}
	&	ht(x)\cdot kt(y)=h\sigma_x^t(k)f^t(x,y)t(xy)\\ & ht(x)\tilde{\star} kt(y)=hk\Gamma^t_x(k)\sigma_{(x\star y)}(h^{-1}k^{-1}\Gamma^t_y(h^{-1})) h^t(x,y)t(x\star y), 
		\end{align*}
		where $\sigma_x^t(k)=t(x)kt(x)^{-1}$, $\Gamma^t_x(k)=t(x)\star k$ are group homomorphisms on $H$
and $f^t, h^t: K\times K \longrightarrow H$ are maps satisfying the following identities
\begin{enumerate}
\item $f^t(1,x) = f^t(x,1) = 1$ and $f^t(x,y)f^t(xy, z) = \sigma_x^t(f^t(y,z))f^t(x, yz)$;
\item $ h^t(x,1)=h^t(1,x)=h^t(x,x)=1$.
\end{enumerate} 
		
In fact, we have a group homomorphism $\sigma^t : K \to Aut(H)$	defined by $\sigma^t(x) = \sigma^t_x$ and a map $\Gamma : K \to End(H)$	defined by $\Gamma^t(x) = \Gamma^t_x$. 
\begin{proposition}
The maps $\sigma^t$ and $\Gamma^t$ are independent on the choice of section $t$.
\end{proposition}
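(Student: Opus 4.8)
The plan is to fix two sections $t,s:K\to G$ and to compare the induced maps pointwise on $H$. Since $\beta\c t=\beta\c s=I_K$, for every $x\in K$ the element $t(x)s(x)^{-1}$ lies in $\ker\beta=i(H)$; identifying $H$ with $i(H)$ as in the displayed formulas, I would write $t(x)=h_x\,s(x)$ for a unique $h_x\in H$. Everything then reduces to showing that this correction factor $h_x$ disappears from both $\sigma^t_x$ and $\Gamma^t_x$, and the two collapsing mechanisms are exactly the abelianness of $H$ and the triviality of the multiplicative Lie structure on $H$.

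For $\sigma^t$ this is immediate: for $k\in H$,
$$\sigma^t_x(k)=t(x)\,k\,t(x)^{-1}=h_x\bigl(s(x)\,k\,s(x)^{-1}\bigr)h_x^{-1}=h_x\,\sigma^s_x(k)\,h_x^{-1}.$$
Because $H$ is normal we have $\sigma^s_x(k)\in H$, and since $H$ is abelian, conjugation by $h_x\in H$ is trivial. Hence $\sigma^t_x(k)=\sigma^s_x(k)$ for all $x$ and $k$, so $\sigma^t=\sigma^s$.

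For $\Gamma^t$ the argument is the same in spirit, but it uses the defining identities of the Lie product $\tilde\star$ in $G$ rather than plain conjugation. Writing $t(x)=h_x s(x)$ and expanding via the multiplicative Lie algebra identity $(ab)\,\tilde\star\,c=\bigl(a\,(b\,\tilde\star\,c)\,a^{-1}\bigr)(a\,\tilde\star\,c)$, I expect
$$\Gamma^t_x(k)=\bigl(h_x\,(s(x)\,\tilde\star\,k)\,h_x^{-1}\bigr)\,(h_x\,\tilde\star\,k).$$
Two structural inputs then finish the computation: first, $H$ carries the trivial multiplicative Lie structure and is closed under $\tilde\star$, so the term $h_x\,\tilde\star\,k=1$; second, since $H$ is an ideal, $s(x)\,\tilde\star\,k=\Gamma^s_x(k)\in H$, and again the abelianness of $H$ makes conjugation by $h_x$ act trivially. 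Thus $\Gamma^t_x(k)=\Gamma^s_x(k)$, giving $\Gamma^t=\Gamma^s$.

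The bookkeeping above is routine; the one genuine point of care — and the main obstacle — is invoking the correct bilinearity identity of a multiplicative Lie algebra for $(h_x s(x))\,\tilde\star\,k$ and confirming the two hypotheses it rests on, namely that $\tilde\star$ vanishes on $H\times H$ and that $G\,\tilde\star\,H\subseteq H$. Both are built into the extension data ($H$ is an ideal with trivial structure), so once they are applied the abelianness of $H$ erases every conjugation and every dependence on the chosen section, yielding the asserted independence.
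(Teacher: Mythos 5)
Your proof is correct and is essentially identical to the paper's: both decompose one section as an $H$-valued correction times the other, kill the conjugation by that correction using abelianness of $H$ (for $\sigma$), and expand $(h_x s(x))\,\tilde\star\,k$ via the identity $(ab)\star c={}^{a}(b\star c)(a\star c)$, using the trivial Lie structure on $H$ plus abelianness (for $\Gamma$). The only difference is the immaterial choice of writing $t(x)=h_x s(x)$ rather than $s(x)=g(x)t(x)$.
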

\begin{proof}
Let $s$ and $t$ are two sections. Then there exists a map $g: K \to H$ with $g(1) = 1$ such that $s(x) = g(x)t(x)$ for every $x\in K$.

Now, $\sigma_{x}^s(h) = s(x)hs(x)^{-1} = g(x)t(x)ht(x)^{-1}g(x)^{-1}= g(x)\sigma_x^t(h)g(x)^{-1} = \sigma_x^t(h)$ (since $H$ is abelian). This shows that the group homomorphism $\sigma^t : K \to Aut(H)$ is independent on the choice of section $t$.

Also, $\Gamma_{x}^s(h) = s(x)\star h = (g(x)t(x))\star h=~ ^{g(x)}(t(x))\star h) (g(x))\star h) = \Gamma_x^t(h)$ (since $H$ is abelian with trivial multiplicative Lie algebra structure). This shows that the map $\Gamma^t : K \to End(H)$ is independent on the choice of section $t$.
\end{proof}
So, now onwards we denote $\sigma^t$ and $\Gamma^t$ by $\sigma$ and $\Gamma$, respectively. Suppose $t$ is a group homomorphism, that is, $G \cong H \rtimes_{\sigma} K$. Then $f^t(x,y) = 1$, for all $x, y \in K$ and $ht(x)\cdot kt(y)=h\sigma_x(k)t(xy)$.
\begin{proposition} \label{Gamma} If $t$  is a splitting,  then we have
$\Gamma_{xy}(h)=\Gamma_x(h)\sigma_x (\Gamma_y(h)) $ and $\Gamma_{x\star y}(\sigma_y(h))=\Gamma_x (\Gamma_y(h)) \Gamma_{xyx^{-1}} (\Gamma_x(h^{-1})) $, for all $x, y \in K$ and $h \in H$.
\end{proposition}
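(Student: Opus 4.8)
The plan is to obtain both identities by specializing the defining axioms of the multiplicative Lie algebra $(G,\cdot,\tilde\star)$ to the elements $t(x),t(y)\in t(K)$ and $h\in H$, and then to simplify using the three structural features of $H$: it is abelian, it carries the trivial Lie product (so $h_1\,\tilde\star\,h_2=1$ for all $h_1,h_2\in H$), and it is an ideal (so $g\,\tilde\star\,h\in H$ for every $g\in G$, $h\in H$). Throughout I write the Lie product of two elements of $G$ as $\tilde\star$, so that $\Gamma_x(h)=t(x)\,\tilde\star\,h\in H$; since $t$ is a splitting, $t(xy)=t(x)t(y)$ and conjugation by $t(x)$ on $H$ is exactly $\sigma_x$. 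For the first identity I would start from $\Gamma_{xy}(h)=t(xy)\,\tilde\star\,h=(t(x)t(y))\,\tilde\star\,h$ and apply the left distributivity axiom $(ab)\,\tilde\star\,c={}^{a}(b\,\tilde\star\,c)\,(a\,\tilde\star\,c)$ with $a=t(x)$, $b=t(y)$, $c=h$. This yields ${}^{t(x)}(t(y)\,\tilde\star\,h)\,(t(x)\,\tilde\star\,h)=\sigma_x(\Gamma_y(h))\,\Gamma_x(h)$, and since both factors lie in the abelian group $H$ they commute, so the product may be rewritten as $\Gamma_x(h)\,\sigma_x(\Gamma_y(h))$, which is the claim.

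For the second identity I would feed the triple $a=t(x)$, $b=t(y)$, $c=h$ into the Jacobi (Hall--Witt) axiom of $G$ and match its three cyclic terms to the three terms in the statement. The decisive term is the one built from $[a,b]=t(x)\,\tilde\star\,t(y)$: by the construction formula one has $t(x)\,\tilde\star\,t(y)=h^t(x,y)\,t(x\star y)$, and forming its bracket with ${}^{t(y)}h=\sigma_y(h)$ and expanding by left distributivity splits it as ${}^{h^t(x,y)}\big(t(x\star y)\,\tilde\star\,\sigma_y(h)\big)\cdot\big(h^t(x,y)\,\tilde\star\,\sigma_y(h)\big)$. Here the second factor is $1$ because both its entries lie in $H$ and $H$ has trivial Lie product, while the outer conjugation by $h^t(x,y)\in H$ is trivial since $H$ is abelian; what survives is exactly $\Gamma_{x\star y}(\sigma_y(h))$, the left-hand side. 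The other two Jacobi terms simplify to $t(x)\,\tilde\star\,(t(y)\,\tilde\star\,h)=\Gamma_x(\Gamma_y(h))$ and, after rewriting ${}^{t(x)}t(y)=t(xyx^{-1})$ and $t(x)\,\tilde\star\,h^{-1}=\Gamma_x(h^{-1})$, to $t(xyx^{-1})\,\tilde\star\,\Gamma_x(h^{-1})=\Gamma_{xyx^{-1}}(\Gamma_x(h^{-1}))$. Collecting the three terms, solving for the first, and reordering the factors in the abelian group $H$ then gives the stated equation.

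The main obstacle is the bookkeeping inside the Jacobi identity. Its honest form carries outer conjugations and inverse-laden entries, so I must verify that every stray conjugation either acts on an element of $H$ by an element of $H$ (hence trivially, as $H$ is abelian) or is absorbed into some $\sigma$, and that every mixed bracket with one entry in $H$ collapses via the trivial Lie product together with the ideal property $g\,\tilde\star\,h\in H$. Pinning down the precise form of the Jacobi axiom in use, aligning its three terms with $\Gamma_{x\star y}(\sigma_y(h))$, $\Gamma_x(\Gamma_y(h))$, and $\Gamma_{xyx^{-1}}(\Gamma_x(h^{-1}))$ with the correct sides and inverses, and confirming that the cocycle values $h^t(x,y)$ genuinely drop out, is where the real care is required; once those reductions are secured, commutativity of $H$ forces the identity.
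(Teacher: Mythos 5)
Your proposal is correct and follows essentially the same route as the paper: the first identity by expanding $(t(x)t(y))\star h$ via left distributivity and commuting the two factors in the abelian group $H$, and the second by substituting $t(x)$, $t(y)$, $h$ into the Hall--Witt axiom and using $t(x)\star t(y)=h^t(x,y)\,t(x\star y)$ together with the triviality of the Lie product on $H$, the ideal property, and commutativity of $H$ to make the cocycle factor drop out, exactly as the paper does. The one bookkeeping point (which you yourself flagged as the place needing care): the second and third Hall--Witt terms are $(t(y)\star h)\star {}^{h}t(x)$ and $(h\star t(x))\star {}^{t(x)}t(y)$, with the $H$-valued entries on the \emph{left} of each bracket, so after simplification they equal $\Gamma_x(\Gamma_y(h))^{-1}$ and $\Gamma_{xyx^{-1}}(\Gamma_x(h^{-1}))^{-1}$ rather than the uninverted expressions you wrote, and it is precisely these inverses that make ``solving for the first term'' yield the stated right-hand side rather than its inverse.
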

\begin{proof}
Since $t$ is a group homomorphism, we have
\begin{align*}
\Gamma_{xy}(h)&=t(xy)\star h \\&= (t(x)t(y))\star h\\
&= ^{t(x)}(t(y)\star h)(t(x)\star h)\\
&= \Gamma_x(h) ^{t(x)}(\Gamma_y(h)) \\&= \Gamma_x(h)\sigma_x (\Gamma_y(h)) 
\end{align*}
Now, $\Gamma_{x\star y}(h) = t(x\star y)\star h = (h(x,y)^{-1} (t(x)\star t(y)))\star h$ \vspace{.2cm}

$=^{{h(x,y)}^{-1}}((t(x)\star t(y))\star h) ({h(x,y)}^{-1}\star h) = (t(x)\star t(y))\star h. $\vspace{.2cm}

Since \vspace{.2cm} 

$ ((t(x)\star t(y)) \star ^{t(y)}h)((t(y)\star h) \star ^ht(x)) ((h\star t(x)) \star ^{t(x)}t(y)) = 1 $, we have\vspace{.2cm}

$((t(x)\star t(y)) \star \sigma_y(h)) (\Gamma_{y}(h) \star ^ht(x))(\Gamma_x(h^{-1})\star t(xyx^{-1})) = 1$\vspace{.2cm}

$ \implies ((\Gamma_{(x\star y)} \star \sigma_y(h)) (\Gamma_{y}(h) \star t(x))(\Gamma_x(h^{-1})\star t(xyx^{-1})) = 1  $\vspace{.2cm}

$ \implies \Gamma_{(x\star y)} (\sigma_y(h) \Gamma_x (\Gamma_{y}(h^{-1})) \Gamma_{xyx^{-1}}(\Gamma_x(h)) = 1 $\vspace{.2cm}

$ \implies \Gamma_{(x\star y)} (\sigma_y(h) = \Gamma_x (\Gamma_{y}(h)) \Gamma_{xyx^{-1}}(\Gamma_x(h^{-1}))  $
\end{proof}

Now, consider the expression 
\begin{align*}
(ht(x)\cdot kt(y))\star lt(z) &= (h\sigma_x(k) t(xy) ) \star lt(z) \\&= hl\sigma_x(k) \Gamma_{xy}(l) \sigma_{(xy) \star z}(h^{-1}l^{-1}\sigma_x(k)^{-1}\Gamma_{z}(h^{-1}\sigma_x(k)^{-1})) \\&  h(xy ,z) t(xy \star z) \tag*{(1)}
\end{align*}
On the other hand 
\begin{align*}
(ht(x)\cdot kt(y))\star lt(z)&=^{ht(x)}(kt(y)\star lt(z)) \cdot (ht(x)\star lt(z) \\&= (ht(x) (kl \Gamma_y(l) \sigma_{(y\star z)}(k^{-1}l^{-1}\Gamma_z(k^{-1}))h(y ,z )t(y \star z)) t(x)^{-1}h^{-1} ) \\& \cdot (hl \Gamma_x(l)\sigma_{(x\star z)}(h^{-1}l^{-1}\Gamma_z(h^{-1}))h(x ,z )t(x \star z)) \\& = h\sigma_x (kl \Gamma_y(l) \sigma_{(y\star z)}(k^{-1}l^{-1}\Gamma_z(k^{-1}))h(y ,z )) \sigma_ {^x(y \star z)}(h^{-1}) t(^x(y \star z)) \\&  \cdot  (hl \Gamma_x(l)\sigma_{(x\star z)}(h^{-1}l^{-1}\Gamma_z(h^{-1})) h(x ,z )t(x \star z)) \\& =  h\sigma_x ( kl \Gamma_y(l) \sigma_{(y\star z)}(k^{-1}l^{-1}\Gamma_z(k^{-1})) h(y ,z ))  \sigma_ {^x(y \star z)} (l \Gamma_x(l) \\&  \sigma_{(x\star z)}(h^{-1}l^{-1}\Gamma_z(h^{-1}))  h(x ,z ))  t(^x(y \star z)) t(x \star z)) \tag*{(2)}
\end{align*}

From equations $(1)$ and $(2)$, we have \\

$l \Gamma_{x}(l) \sigma_{(xy) \star z}(h^{-1}l^{-1}\sigma_x(k^{-1})\Gamma_{z}(h^{-1}\sigma_x(k^{-1})) h(xy ,z) = \sigma_x ( l \sigma_{(y\star z)}(k^{-1}l^{-1}\Gamma_z(k^{-1}))\\ h(y ,z )) \sigma_ {^x(y \star z)} (l \Gamma_x(l)\sigma_{(x\star z)}(h^{-1}l^{-1}\Gamma_z(h^{-1}))h(x ,z )) $ \hspace{6cm}(3)\\

Now consider the expression 
\begin{align*}
ht(x)\star (kt(y)\cdot lt(z)) &=  ht(x)\star (k\sigma_y(l) t(yz) ) \\&=  hk\sigma_y(l) \Gamma_x(k\sigma_y(l)) \sigma_{x \star (yz)}(h^{-1}k^{-1}\sigma_y(l)^{-1}\Gamma_{yz}(h^{-1}))\\& h(x ,yz) t(x \star yz)
\tag*{(4)}
\end{align*}

On the other hand 
\begin{align*}
ht(x)\star (kt(y)\cdot lt(z)) &= (ht(x)\star kt(y)) \cdot ^{kt(y)}(ht(x)\star lt(z)) \\&=
(hk \Gamma_x(k)\sigma_{(x\star y)}(h^{-1}k^{-1}\Gamma_y(h^{-1}))h(x ,y )t(x \star y)) \\&\cdot (kt(y) (hl \Gamma_x(l) \sigma_{(x\star z)}(h^{-1}l^{-1}\Gamma_z(h^{-1}))h(x ,z )t(x \star z)) t(y)^{-1}k^{-1} ) \\& = (hk \Gamma_x(k)\sigma_{(x\star y)}(h^{-1}k^{-1}\Gamma_y(h^{-1}))h(x ,y )t(x \star y)) \\& \cdot (k\sigma_y (hl \Gamma_x(l) \sigma_{(x\star z)}(h^{-1}l^{-1}\Gamma_z(h^{-1}))h(x ,z )t(y)t(x \star z)) t(y)^{-1}k^{-1} ) \\& =  (hk \Gamma_x(k)\sigma_{(x\star y)}(h^{-1}k^{-1}\Gamma_y(h^{-1}))h(x ,y )t(x \star y)) \\& \cdot (k\sigma_y (hl \Gamma_x(l) \sigma_{(x\star z)}(h^{-1}l^{-1}\Gamma_z(h^{-1}))h(x ,z )t(y)t(x \star z)) t(y)^{-1}k^{-1} ) \\&  = (hk \Gamma_x(k)\sigma_{(x\star y)}(h^{-1}k^{-1}\Gamma_y(h^{-1}))h(x ,y ))  \cdot \sigma_{x \star y} ((k\sigma_y (hl \Gamma_x(l) \\& \sigma_{(x\star z)}(h^{-1}l^{-1}\Gamma_z(h^{-1}))h(x ,z )) \sigma_{x \star (yz)}(k^{-1}) t( (x \star y)^y(x \star z)) \tag*{(5)}
\end{align*}

From equations $(4)$ and $(5)$, we have \\

$\sigma_y(l) \Gamma_x(\sigma_y(l)) \sigma_{x \star (yz)}(h^{-1}\sigma_y(l^{-1})\Gamma_{yz}(h^{-1})  h(x ,yz) = \sigma_{(x\star y)}(h^{-1}\Gamma_y(h^{-1}))h(x ,y )\\ \sigma_{(x \star y)y} ( (hl \Gamma_x(l) \sigma_{(x\star z)}(h^{-1}l^{-1}\Gamma_z(h^{-1}))h(x ,z )) $ \hspace{7cm}(6)
\\

Consider the expressions,
\begin{align*}
((ht(x)\star kt(y)) \star ^{kt(y)}lt(z)) & = (ht(x)\star kt(y)) \star (t(y)lt(z)t(y)^{-1}k^{-1}) \\& \hspace*{-2cm} = hk \Gamma_x(k)\sigma_{(x\star y)}(h^{-1}k^{-1}\Gamma_y(h^{-1}))h(x ,y )t(x \star y) \star k\sigma_y(l) \sigma_{^yz}(k^{-1}) t(^yz) 
\\& \hspace*{-2cm}= hk \Gamma_x(k)\sigma_{(x\star y)}(h^{-1}k^{-1}\Gamma_y(h^{-1}))h(x ,y )  k\sigma_y(l)\sigma_{^yz}(k^{-1}) \Gamma_{(x\star y)}(k\sigma_y(l) \\& \hspace*{-1.5cm} \sigma_{^yz}(k^{-1})) \sigma_{((x\star y) \star ^yz )} (h^{-1}k^{-1} \Gamma_x(k)^{-1} \sigma_{(x\star y)}(hk\Gamma_y(h)) h(x ,y )^{-1} k^{-1} \\& \hspace*{-1.5cm}  \sigma_y(l^{-1})\sigma_{^yz}(k)\Gamma_{^yz}(h^{-1}k^{-1} \Gamma_x(k)^{-1} \sigma_{(x\star y)}(hk\Gamma_y(h)) h(x ,y )^{-1}))\\&\hspace*{-1.5cm} h(x \star y, ^yz ) t((x \star y)\star ^yz) 
\end{align*}
Thus, we have 

$ ((ht(x)\star kt(y)) \star ^{kt(y)}lt(z)) = hk^2\Gamma_x(k) \sigma_y(l)\sigma_{^yz}(k^{-1}) h(x ,y ) \Gamma_{(x\star y)}(k\sigma_y(l)\sigma_{^yz}(k^{-1})) $

 \hspace{1cm} $ \sigma_{(x\star y)}(h^{-1}k^{-1}\Gamma_y(h^{-1}))   \sigma_{((x\star y) \star ^yz )} (h^{-1}k^{-2} \Gamma_x(k)^{-1} \sigma_{(x\star y)}(hk\Gamma_y(h)) h(x ,y )^{-1}  \sigma_y(l^{-1})$
 
 \hspace{1.5cm}$ \sigma_{^yz}(k) \Gamma_{^yz}(h^{-1}k^{-1} \Gamma_x(k)^{-1} \sigma_{(x\star y)}(hk\Gamma_y(h)) h(x ,y )^{-1})) h(x \star y, ^yz ) t((x \star y)\star ^yz) $\\

Similarly, we can calculate  

 $((kt(y)\star lt(z)) \star ^{lt(z)}ht(x)) \ \text{and} \ ((lt(z)\star ht(x)) \star ^{ht(x)}kt(y)). $\\
 
Since
$ ((ht(x)\star kt(y)) \star ^{kt(y)}lt(z))((kt(y)\star lt(z)) \star ^{lt(z)}ht(x)) ((lt(z)\star ht(x)) \star ^{ht(x)}kt(y)) = 1 $, we have the following equation\\

$( hk^2\Gamma_x(k) \sigma_y(l)\sigma_{^yz}(k^{-1}) h(x ,y ) \Gamma_{(x\star y)}(k\sigma_y(l)\sigma_{^yz}(k^{-1}))\sigma_{(x\star y)}(h^{-1}k^{-1}\Gamma_y(h^{-1}))\\   \sigma_{((x\star y) \star ^yz )} (h^{-1}k^{-2} \Gamma_x(k)^{-1} \sigma_{(x\star y)}(hk\Gamma_y(h)) h(x ,y )^{-1}  \sigma_y(l^{-1})\sigma_{^yz}(k)  \Gamma_{^yz}(h^{-1}k^{-1} \Gamma_x(k)^{-1} \\ \sigma_{(x\star y)}(hk\Gamma_y(h)) h(x ,y )^{-1})) h(x \star y, ^yz ) \sigma_{((x \star y)\star ^yz))} ((kl^2\Gamma_y(l) \sigma_z(h)\sigma_{^zx}(l^{-1}) h(y ,z ) \Gamma_{(y\star z)}(l\sigma_z(h) \\ \sigma_{^zx}(l^{-1})) \sigma_{(y\star z)}(k^{-1}l^{-1}\Gamma_z(k^{-1}))   \sigma_{((y\star z) \star ^zx )} (k^{-1}l^{-2} \Gamma_y(l)^{-1} \sigma_{(y\star z)}(kl\Gamma_z(k)) h(y ,z )^{-1}  \sigma_z(h^{-1})\sigma_{^zx}(l) \\ \Gamma_{^zx}(k^{-1}l^{-1} \Gamma_y(l)^{-1}\sigma_{(y\star z)}(kl\Gamma_z(k)) h(y ,z )^{-1})) h(y \star z, ^zx )) \sigma_{((y \star z)\star ^zx) )((z \star x)\star ^xy))}  ((lh^2\Gamma_z(h) \sigma_x(k) \\ \sigma_{^xy}(h^{-1}) h(z ,x )  \Gamma_{(z\star x)}(h\sigma_x(k)\sigma_{^xy}(h^{-1}))\sigma_{(z\star x)}(l^{-1}h^{-1}\Gamma_x(l^{-1}))   \sigma_{((z\star x) \star ^xy )} (l^{-1}h^{-2} \Gamma_z(h)^{-1} \\ \sigma_{(z\star x)}  (lh\Gamma_x(l)) h(z ,x )^{-1}  \sigma_x(k^{-1})\sigma_{^xy}(h)  \Gamma_{^xy}(l^{-1}h^{-1} \Gamma_x(h)^{-1}\sigma_{(z\star x)}(lh\Gamma_x(l)) h(z ,x )^{-1}))\\ h(z \star x, ^xy ))  = 1 $ \hspace{12cm}(7)\\

Now consider the expression 
\begin{align*}
^{lt(z)}(ht(x)\star kt(y)) &=lt(z) hk \Gamma_x(k)\sigma_{(x\star y)}(h^{-1}k^{-1}\Gamma_y(h^{-1}))h(x ,y )t(x \star y) t(z^{-1})l^{-1}
\\& =l\sigma_z (hk \Gamma_x(k)\sigma_(x\star y)(h^{-1}k^{-1}\Gamma_y(h^{-1}))h(x ,y )) \sigma_{^z(x\star y)}(l^{-1})  t(^z(x\star y)) \tag*{(8)}
\end{align*}
Also, we have
\begin{align*}
^{lt(z)}ht(x)\star ^{lt(z)} kt(y) &=  lt(z)ht(x)t(z^{-1})l^{-1}\star {lt(z)}kt(y)t(z^{-1})l^{-1} \\& = (l\sigma _z(h)\sigma_{^zx}(l^{-1})t(^zx))\star (l\sigma _z(k)\sigma_{^zy}(l^{-1})t(^zy)) \\& =  l^2\sigma _z(h) \sigma _z(k) \sigma_{^zx}(l^{-1}) \sigma_{^zy}(l^{-1}) \Gamma_{^zx}(l\sigma _z(k)\sigma_{^zy}(l^{-1}))  \sigma_{^z(x\star y)}(l^{-2} \sigma _z(h^{-1})\\& \sigma _z(k^{-1}) \sigma_{^zx}(l)
\sigma_{^zy}(l) \Gamma_{^zy}(l^{-1}\sigma _z(h^{-1})\sigma_{^zx}(l) ))  h(^zx , ^zy) t(^zx \star ^zy)  \tag*{(9)} 
\end{align*}
From equations $(8)$ and $(9)$, we have
\\

$ \sigma_z ( \Gamma_x(k)\sigma_{(x\star y)}(h^{-1}k^{-1}\Gamma_y(h^{-1}))h(x ,y )) = l \sigma_{^zx}(l^{-1}) \sigma_{^zy}(l^{-1}) \Gamma_{^zx}(l\sigma _z(k)\sigma_{^zy}(l^{-1}))\\ \sigma_{^z(x\star y)}(l^{-1} \sigma _z(h^{-1}) \sigma _z(k^{-1}) \sigma_{^zx}(l) \sigma_{^zy}(l) \Gamma_{^zy}(l^{-1}\sigma _z(h^{-1})\sigma_{^zx}(l) )) h(^zx , ^zy) $	\hspace{2.55cm}(10)

\begin{lemma} \label{commutes} If  $K$ is an abelian group, then 	$\sigma_x \circ \Gamma_z = \Gamma_z \circ \sigma_x  , \forall  x,z\in K $. 
	
\end{lemma}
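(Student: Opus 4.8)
The plan is to prove the identity by a direct computation in $G$, expanding $\sigma_x(\Gamma_z(h))$ from the definitions of $\sigma_x$ and $\Gamma_z$ and then recognising the result as $\Gamma_z(\sigma_x(h))$. The only structural facts I expect to need are that $t$ is a splitting homomorphism, that $H$ is an ideal (so that $\Gamma_z(h)=t(z)\star h\in H$ and $\sigma_x$ may legitimately be applied to it), and the defining compatibility of the multiplicative Lie product with conjugation, namely ${}^g(a\star b)={}^g a\star {}^g b$ for all $g,a,b\in G$. This last identity is the natural tool, since in the motivating example $\star=[\,\cdot\,,\cdot\,]$ it is simply the fact that conjugation is a group automorphism.

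First I would observe that, because $\Gamma_z(h)=t(z)\star h$ lies in $H$, the automorphism $\sigma_x$ acts on it by conjugation by $t(x)$, so that
\[
\sigma_x(\Gamma_z(h)) = t(x)\,(t(z)\star h)\,t(x)^{-1} = {}^{t(x)}(t(z)\star h).
\]
Next I would apply the conjugation axiom with $g=t(x)$, $a=t(z)$ and $b=h$, turning the right-hand side into $\big({}^{t(x)}t(z)\big)\star\big({}^{t(x)}h\big)$. The second factor is exactly $\sigma_x(h)$. For the first factor, since $t$ is a homomorphism we have ${}^{t(x)}t(z)=t(x)t(z)t(x)^{-1}=t(xzx^{-1})$, and here is the single place where the hypothesis enters: $K$ abelian forces $xzx^{-1}=z$, so ${}^{t(x)}t(z)=t(z)$. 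Chaining the equalities then gives
\[
\sigma_x(\Gamma_z(h)) = t(z)\star\sigma_x(h) = \Gamma_z(\sigma_x(h)),
\]
and as $h\in H$ was arbitrary this yields $\sigma_x\circ\Gamma_z=\Gamma_z\circ\sigma_x$.

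I do not anticipate a genuine obstacle: the computation is essentially two lines once the correct tool is selected. The main point is to recognise that the appropriate identity is the compatibility of $\star$ with inner automorphisms, rather than any of the heavier cocycle relations $(3)$, $(6)$, $(7)$ or $(10)$ derived above; in particular none of the data $f^t$ or $h^t$ is needed, precisely because $t$ is a splitting and $\Gamma_z(h)$ already lands in the abelian ideal $H$. Commutativity of $K$ is used in exactly one transparent spot, to collapse the conjugate $t(xzx^{-1})$ back to $t(z)$, which also makes clear why the hypothesis is essential and cannot be dropped in general.
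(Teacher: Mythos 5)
Your proof is correct and is essentially the paper's own argument: both hinge on the conjugation-equivariance of $\star$ together with $t$ being a homomorphism and $K$ abelian to collapse $t(xzx^{-1})$ to $t(z)$. The only cosmetic difference is that the paper expands both $(\sigma_x\circ\Gamma_z)(h)$ and $(\Gamma_z\circ\sigma_x)(h)$ to the common value ${}^{t(x)}(t(z)\star h)$, whereas you transform one side directly into the other.
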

\begin{proof} Let $h\in H$. Then 

$(  \sigma_x \circ \Gamma_z)(h) = \sigma_x(t(z)\star h)  = t(x)(t(z)\star h)t(x)^{-1} = ~ ^{t(x)}(t(z)\star h)  $ \hspace{2cm}(11)\\
	
On the other hand, \\

	$(\Gamma_z \circ \sigma_x)(h)=   \Gamma_z(t(x)ht(x)^{-1})=   t(z)\star (t(x)ht(x)^{-1})
	=t(z)\star (^{t(x)}h)$ 
	
\hspace{2.2cm}	$= ^{t(x)}(^{t(x)^{-1}}t(z)\star h) = ^{t(x)}(t(x^{-1}zx)\star h) = ^{t(x)}(t(z)\star h) $ \hspace{2cm}(12)\\

	By equation $(11)$ and $(12)$, we have
	
	$\Gamma_z \circ \sigma_x =  \sigma_x \circ \Gamma_z. $ That is, $\Gamma_z$ and  $\sigma_x $ commutes with each other.
\end{proof}

From the above discussion, we have the following theorem:
\begin{theorem} \label{semi-direct}
Let $G = H \rtimes_{\sigma} K,$ where $H$ is an abelian group with trivial multiplicative Lie algebra structure and $K$ is a group. Suppose $\star$ is a multiplicative Lie algebra structure on $K$, and  maps $\Gamma : K \to End(H)$ and  $ h : K\times K \to H$ that satisfies the following conditions for all $x, y, z \in K$ and   	$h, k, l \in H:$
\begin{enumerate}
	\item $ h(x,1)=h(1,x)=h(x,x)=1$;
	\item $\Gamma_{xy}(h)=\Gamma_x(h)\sigma_x (\Gamma_y(h)) $ and $\Gamma_{x\star y}(\sigma_y(h))=\Gamma_x (\Gamma_y(h)) \Gamma_{xyx^{-1}} (\Gamma_x(h^{-1})) $;
	\item $l \Gamma_{x}(l) \sigma_{(xy) \star z}(h^{-1}l^{-1}\sigma_x(k^{-1})\Gamma_{z}(h^{-1}\sigma_x(k^{-1})) h(xy ,z) = \sigma_x ( l  \sigma_{(y\star z)}(k^{-1}l^{-1}\Gamma_z(k^{-1}))\\ h(y ,z )) \sigma_ {^x(y \star z)} (l \Gamma_x(l)\sigma_{(x\star z)}(h^{-1}l^{-1}\Gamma_z(h^{-1}))h(x ,z )) $;
	\item $\sigma_y(l) \Gamma_x(\sigma_y(l)) \sigma_{x \star (yz)}(h^{-1}\sigma_y(l^{-1})\Gamma_{yz}(h^{-1})  h(x ,yz) = \sigma_{(x\star y)}(h^{-1}\Gamma_y(h^{-1}))h(x ,y )\\ \sigma_{(x \star y)y} ( (hl \Gamma_x(l) \sigma_{(x\star z)}(h^{-1}l^{-1}\Gamma_z(h^{-1}))h(x ,z )) $;
	\item $ ( hk^2\Gamma_x(k) \sigma_y(l)\sigma_{^yz}(k^{-1}) h(x ,y ) \Gamma_{(x\star y)}(k\sigma_y(l)\sigma_{^yz}(k^{-1}))\sigma_{(x\star y)}(h^{-1}k^{-1}\Gamma_y(h^{-1}))\\   \sigma_{((x\star y) \star ^yz )} (h^{-1}k^{-2} \Gamma_x(k)^{-1} \sigma_{(x\star y)}(hk\Gamma_y(h)) h(x ,y )^{-1}  \sigma_y(l^{-1})\sigma_{^yz}(k)  \Gamma_{^yz}(h^{-1}k^{-1} \Gamma_x(k)^{-1} \\ \sigma_{(x\star y)}(hk\Gamma_y(h)) h(x ,y )^{-1})) h(x \star y, ^yz ) \sigma_{((x \star y)\star ^yz))} ((kl^2\Gamma_y(l) \sigma_z(h)\sigma_{^zx}(l^{-1}) h(y ,z )\\ \Gamma_{(y\star z)}(l\sigma_z(h)  \sigma_{^zx}(l^{-1})) \sigma_{(y\star z)}(k^{-1}l^{-1}\Gamma_z(k^{-1}))   \sigma_{((y\star z) \star ^zx )} (k^{-1}l^{-2} \Gamma_y(l)^{-1} \sigma_{(y\star z)}(kl\Gamma_z(k)) \\ h(y ,z )^{-1}  \sigma_z(h^{-1})\sigma_{^zx}(l)  \Gamma_{^zx}(k^{-1}l^{-1} \Gamma_y(l)^{-1}\sigma_{(y\star z)}(kl\Gamma_z(k)) h(y ,z )^{-1})) h(y \star z, ^zx )) \\\sigma_{((y \star z)\star ^zx) )((z \star x)\star ^xy))}  ((lh^2\Gamma_z(h) \sigma_x(k)  \sigma_{^xy}(h^{-1}) h(z ,x )  \Gamma_{(z\star x)}(h\sigma_x(k)\sigma_{^xy}(h^{-1})) \\ \sigma_{(z\star x)}(l^{-1}h^{-1}\Gamma_x(l^{-1}))   \sigma_{((z\star x) \star ^xy )} (l^{-1}h^{-2} \Gamma_z(h)^{-1}  \sigma_{(z\star x)}  (lh\Gamma_x(l)) h(z ,x )^{-1}  \sigma_x(k^{-1}) \\ \sigma_{^xy}(h)  \Gamma_{^xy}(l^{-1}h^{-1} \Gamma_x(h)^{-1}\sigma_{(z\star x)}(lh\Gamma_x(l)) h(z ,x )^{-1})) h(z \star x, ^xy ))  = 1 $;
	\item $ \sigma_z ( \Gamma_x(k)\sigma_{(x\star y)}(h^{-1}k^{-1}\Gamma_y(h^{-1}))h(x ,y )) = l \sigma_{^zx}(l^{-1}) \sigma_{^zy}(l^{-1}) \Gamma_{^zx}(l\sigma _z(k)\sigma_{^zy}(l^{-1}))\\ \sigma_{^z(x\star y)}(l^{-1} \sigma _z(h^{-1}) \sigma _z(k^{-1}) \sigma_{^zx}(l) \sigma_{^zy}(l) \Gamma_{^zy}(l^{-1}\sigma _z(h^{-1})\sigma_{^zx}(l) )) h(^zx , ^zy) .$
\end{enumerate}	
Then we have a multiplicative Lie algebra structure $\tilde{\star}$ on $G$ defined  by 
$$(h,x)\tilde{\star} (k,y) = (hk\Gamma_x(k)\sigma_{(x\star y)}(h^{-1}k^{-1}\Gamma_y(h^{-1})) h(x,y), x \star y) ~\forall~ (h, x), (k, y)\in G.$$	In this case, we call $\tilde{\star}$ is induced by  $\star$, and maps $\Gamma$ and $h$.

Conversely, let $\tilde{\star}$ be a  multiplicative Lie algebra structure  on $G$ such that $H$ is an ideal. Then there is a multiplicative Lie algebra structure $\star$ on $K$, and  maps $\Gamma : K \to End(H)$ and  $ h : K\times K \to H$ that satisfies  conditions from (1) to (6) given above such that $(h,x)\tilde{\star} (k,y) = (hk\Gamma_x(k)\sigma_{(x\star y)}(h^{-1}k^{-1}\Gamma_y(h^{-1})) h(x,y), x \star y) ~\forall~ (h, x), (k, y)\in G.$ In this case, we call $\tilde{\star}$ is determined by  $\star$, and maps $\Gamma$ and $h$.

	\end{theorem}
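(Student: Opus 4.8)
The plan is to verify that the displayed operation $\tilde{\star}$ satisfies the defining axioms of a multiplicative Lie product, and to observe that each axiom, once expanded in the coordinates of $G = H\rtimes_{\sigma} K$, becomes logically equivalent to one of the listed conditions (1)--(6); the converse then runs the same expansions backwards. Recall that a map $\tilde{\star}$ on a group $G$ is a multiplicative Lie product if for all $a,b,c\in G$ it satisfies (i) $a\tilde{\star} a = 1$; (ii) $a\tilde{\star}(bc) = (a\tilde{\star} b)\,{}^{b}(a\tilde{\star} c)$; (iii) $(ab)\tilde{\star} c = {}^{a}(b\tilde{\star} c)(a\tilde{\star} c)$; (iv) ${}^{c}(a\tilde{\star} b) = {}^{c}a\,\tilde{\star}\,{}^{c}b$; and (v) the Jacobi identity $((a\tilde{\star} b)\tilde{\star}\,{}^{b}c)((b\tilde{\star} c)\tilde{\star}\,{}^{c}a)((c\tilde{\star} a)\tilde{\star}\,{}^{a}b) = 1$. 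For the forward direction I would write three arbitrary elements as $ht(x)$, $kt(y)$, $lt(z)$, where $t(x)=(1,x)$ is the canonical splitting, substitute the formula for $\tilde{\star}$, and compare the two sides of each axiom, exactly as carried out in the discussion preceding the statement.

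Each axiom then matches one condition. Axiom (i) reduces to the normalization $h(x,x)=1$ of condition (1) together with the fact that $\Gamma_x$ is a group endomorphism of the abelian group $H$, which forces the $H$-component $\Gamma_x(h)\Gamma_x(h^{-1})$ to collapse to $1$. Axiom (iv) is obtained by expanding ${}^{lt(z)}(ht(x)\tilde{\star} kt(y))$ and ${}^{lt(z)}ht(x)\,\tilde{\star}\,{}^{lt(z)}kt(y)$ separately, as in equations (8)--(9), and equating them; this is precisely condition (6). Left distributivity (iii) is equations (1)--(3) and yields condition (3); right distributivity (ii) is equations (4)--(6) and yields condition (4); the Jacobi identity (v) is the long expansion culminating in equation (7) and yields condition (5). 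Condition (2) is not itself one of the five axioms but the compatibility of $\Gamma$ with the group and Lie operations of $K$ already established in Proposition \ref{Gamma} (which uses only that $t$ is a splitting); it is invoked repeatedly to simplify intermediate terms, and Lemma \ref{commutes} is used whenever $\sigma$ and $\Gamma$ must be interchanged. Since each axiom is equivalent to its corresponding condition, conditions (1)--(6) are exactly what is needed for $\tilde{\star}$ to be a multiplicative Lie product.

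For the converse, given a multiplicative Lie algebra structure $\tilde{\star}$ on $G$ for which $H$ is an ideal, I would again use the canonical splitting $t(x)=(1,x)$ and read off the data intrinsically: the structure $\star$ on $K\cong G/H$ is the one induced on the quotient, which is well defined precisely because $H$ is an ideal; $\Gamma_x(h)$ is defined as $t(x)\tilde{\star} h$, which lies in $H$ for the same reason and one checks equals the $H$-component of the formula; and $h(x,y)$ is defined by $t(x)\tilde{\star} t(y) = h(x,y)\,t(x\star y)$, i.e. as the discrepancy between $t(x)\tilde{\star} t(y)$ and $t(x\star y)$. With these definitions the same five expansions, now read as equalities holding for the given $\tilde{\star}$, force conditions (1),(3),(4),(5),(6), while Proposition \ref{Gamma} supplies condition (2); substituting the definitions back shows the coordinate formula for $\tilde{\star}$ is forced to be the displayed one.

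The main obstacle I anticipate is bookkeeping rather than conceptual difficulty: verifying the Jacobi identity (condition (5)) requires fully expanding the three cyclic terms $((ht(x)\tilde{\star} kt(y))\tilde{\star}\,{}^{kt(y)}lt(z))$ and its cyclic rotates, each already containing nested applications of $\sigma$, $\Gamma$, and the factor map $h(\cdot,\cdot)$, and then tracking the cancellations across the product of the three. Keeping the noncommutative multiplication of $G$ in the correct order while exploiting that $H$ is abelian with trivial Lie structure---so that all $H$-valued terms ultimately commute and all $\star$-products among elements of $H$ vanish---is where care is essential; Lemma \ref{commutes} and condition (2) are precisely the tools that legitimize these rearrangements.
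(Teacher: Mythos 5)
Your proposal follows essentially the same route as the paper: the paper's ``proof'' of Theorem \ref{semi-direct} is exactly the discussion preceding it, in which each multiplicative Lie algebra axiom is expanded in the coordinates of the splitting --- equations (1)--(3) yielding condition (3), (4)--(6) yielding condition (4), (7) yielding condition (5), (8)--(10) yielding condition (6), Proposition \ref{Gamma} supplying condition (2), and the normalization of $h^t$ from Remark 4.14 of \cite{MS1} supplying condition (1) --- with the converse read off from the section in the same way, just as you describe. One small caution: Lemma \ref{commutes} assumes $K$ abelian, so it cannot be invoked ``whenever $\sigma$ and $\Gamma$ must be interchanged'' in the general setting of the theorem; fortunately conditions (3)--(6) are stated as the raw outcomes of the expansions, so no such interchange is actually needed.
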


\begin{remark} \label{direct}
In particular, suppose  $G = H \times K$ (that is, $\sigma = I_H$) and there are maps $\Gamma : K \to End(H)$ and  $ h : K\times K \to H$ that satisfies the following conditions for all $x, y, z \in K$  and  $h, k, l \in H:$
\begin{enumerate}
	\item $ h(x,1)=h(1,x)=h(x,x)=1  $;
	
	\item $\Gamma_{xy}(h)=\Gamma_x(h)\Gamma_y(h) $ and $\Gamma_{x\star y}(h)=\Gamma_x (\Gamma_y(h)) \Gamma_{y} (\Gamma_x(h^{-1})), $ that is, $\Gamma$ is a multiplicative Lie algebra homomorphism;
	
	\item $  h(xy ,z) =  h(x ,z )h(y ,z )  $;
	
	\item $   h(x ,yz) = h(x ,y )h(x ,z ) $;
	
	\item $  \Gamma_{(x\star y)}(l)  \Gamma_{(y\star z)}(h)  \Gamma_{(z\star x)} (k) \Gamma_z(h(x, y)^{-1}) \Gamma_x(h(y, z)^{-1}) \Gamma_y(h(z, x)^{-1})  h(x \star y, ^yz )\\ h(y \star z, ^zx ) h(z \star x, ^xy ) = 1 $;
	\item $ h(^zx , ^zy) = h(x ,y ). $
\end{enumerate}
Then we have a multiplicative Lie algebra structure $\tilde{\star}$ on $G$ defined  by
$$(h,x)\tilde{\star} (k,y) = ( \Gamma_x(k)\Gamma_y(h^{-1})h(x ,y ), x \star y) ~\forall~ (h, x), (k, y)\in G.$$
Conversely, let $\tilde{\star}$ be a  multiplicative Lie algebra structure  on $G$ such that $H$ is an ideal. Then there is a multiplicative Lie algebra structure $\star$ on $K$, and  maps $\Gamma : K \to End(H)$ and  $ h : K\times K \to H$ that satisfies  conditions from (1) to (6) given above such that $$(h,x)\tilde{\star} (k,y) = ( \Gamma_x(k)\Gamma_y(h^{-1})h(x ,y ), x \star y) ~\forall~ (h, x), (k, y)\in G.$$


%
%
%
%
%
\end{remark}
\begin{remark} \label{gcd direct}
	Let $G = H\times K,$ where $H$ is an abelian group with trivial multiplicative Lie algebra structure and $K$ is a finite group generated by two elements $a$ and $b$ such that $(|H|, |a|) = 1.$ It is easy to verify that there is no non-trivial alternating map from $K \times K$ to $H$. Therefore, every multiplicative Lie algebra structure $\tilde{\star} $ on $G$ with respect to which $H$ is an ideal is determined by  a multiplicative Lie algebra structure $\star$ on $K$ and a multiplicative Lie algebra homomorphism $\Gamma : K \to End(H)$, and it is defined by 
	$$(h,x)\tilde{\star} (k,y) = ( \Gamma_x(k)\Gamma_y(h^{-1}), x \star y),$$ where $\Gamma$ satisfies 
		$  \Gamma_{(x\star y)}(l)  \Gamma_{(y\star z)}(h)  \Gamma_{(z\star x)} (k)  = 1 $ for all $x, y, z \in K$ and $h, k, l \in H.$ 	
	
	\end{remark}
\begin{proposition} \label{gcd}
Let $G = H\times K,$ where $H$ is an abelian group of order $m$ with trivial multiplicative Lie algebra structure and $K$ is a group of order $n$ such that $(m, n) = 1.$ Then every multiplicative Lie algebra structure $\tilde{\star} $ on $G$ is determined by a multiplicative Lie algebra structure $\star$ on $K$ and map $\Gamma : K \to End(H),$  that satisfies the following conditions for all $x, y, z \in K$ and $h, k, l \in H:$ 	
\begin{enumerate}

		\item $\Gamma_{xy}(h)=\Gamma_x(h)\Gamma_y(h) $ and $\Gamma_{x\star y}(h)=\Gamma_x (\Gamma_y(h)) \Gamma_{y} (\Gamma_x(h^{-1}));$
		
		\item $  \Gamma_{(x\star y)}(l)  \Gamma_{(y\star z)}(h)  \Gamma_{(z\star x)} (k)  = 1 $
	
	\end{enumerate}	
and it is defined by 
 
$(h,x)\tilde{\star} (k,y) = ( \Gamma_x(k)\Gamma_y(h^{-1}), x \star y).$
	\end{proposition}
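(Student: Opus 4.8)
The plan is to deduce this proposition from Remark \ref{direct} by showing that the coprimality hypothesis $(m,n)=1$ forces two independent simplifications: that $H$ is automatically an ideal of $G$ under \emph{any} multiplicative Lie algebra structure $\tilde{\star}$ (so Remark \ref{direct} applies with no extra assumption), and that the alternating map $h:K\times K\to H$ appearing there is forced to be trivial. Both reductions are ultimately Lagrange-type order arguments driven by $(m,n)=1$.

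First I would establish that $H$ is an ideal. Since $G=H\times K$ is a direct product with $H$ abelian, the subgroup $H$ is central in $G$, and coprimality gives the characterization $H=\{g\in G:g^{m}=1\}$ (an element $(c,b)$ satisfies $g^{m}=1$ iff $b^{m}=1$ iff $b=1$, using $\gcd(m,n)=1$); in particular $H$ is normal. For $\star$-closure, fix $g\in G$ and $a\in H$. Applying the identity $x\star(yz)=(x\star y)\cdot{}^{y}(x\star z)$ repeatedly gives $g\star a^{j}=(g\star a)\cdot{}^{a}(g\star a^{j-1})$, and since $a$ is central every conjugation ${}^{a^{i}}(\,\cdot\,)$ is the identity on $G$, so $g\star a^{m}=(g\star a)^{m}$. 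As $a^{m}=1$ and $g\star 1=1$, I obtain $(g\star a)^{m}=1$, whence $g\star a\in H$; the symmetric identity $(xy)\star z={}^{x}(y\star z)(x\star z)$ gives $a\star g\in H$ in the same way. Thus $H$ is an ideal, and Remark \ref{direct} applies: $\tilde{\star}$ is determined by a structure $\star$ on $K$, a map $\Gamma:K\to End(H)$, and a map $h:K\times K\to H$ satisfying conditions (1)--(6) there.

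Next I would annihilate $h$. Conditions (3) and (4) of Remark \ref{direct} assert exactly that $h$ is bilinear, so for each fixed $z$ the assignment $x\mapsto h(x,z)$ is a group homomorphism $K\to H$. Its image is a subgroup of $H$, hence has order dividing $m=|H|$; being a homomorphic image of $K$, its order also divides $n=|K|$. Since $(m,n)=1$ the image is trivial, so $h\equiv 1$. Substituting $h=1$ collapses conditions (1), (3), (4), (6) of Remark \ref{direct} to tautologies, turns condition (5) into $\Gamma_{(x\star y)}(l)\,\Gamma_{(y\star z)}(h)\,\Gamma_{(z\star x)}(k)=1$, keeps condition (2) as the statement that $\Gamma$ is a multiplicative Lie algebra homomorphism, and reduces the product formula to $(h,x)\,\tilde{\star}\,(k,y)=(\Gamma_{x}(k)\Gamma_{y}(h^{-1}),x\star y)$ -- precisely the asserted conditions (1), (2) and formula. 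The converse direction (that any such pair $\star,\Gamma$ does define a multiplicative Lie algebra structure) is the $h\equiv1$ specialization of the converse already recorded in Remark \ref{direct}.

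I expect the first step -- proving that $H$ is \emph{forced} to be an ideal -- to be the main obstacle, since the vanishing of $h$ is then a routine order argument. The crux there is the interplay between the centrality of $H$ in the direct product and the coprimality identity $H=\{g:g^{m}=1\}$, which together let the commutator identity convert ``$a$ has order dividing $m$'' into ``$g\star a$ has order dividing $m$,'' thereby pinning $g\star a$ inside $H$ without any a priori assumption that $H$ is an ideal.
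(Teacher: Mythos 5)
Your proposal is correct and follows essentially the same route as the paper's proof: reduce to Remark \ref{direct} by showing that coprimality forces $H$ to be an ideal (via the iterated Lie identity, centrality of $H$, and the fact that $(a\tilde{\star}g)^m=a^m\tilde{\star}g=1$ pins $a\tilde{\star}g$ inside $H$) and forces the bilinear map $h$ to vanish by a Lagrange-type order argument. Your write-up is in fact somewhat more explicit than the paper's, which asserts $a^m\tilde{\star}g=(a\tilde{\star}g)^m$ without spelling out the iteration and kills $h$ using the order of a single element rather than the image of the homomorphism $x\mapsto h(x,z)$, but these are only presentational differences.
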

	
%
%
%
%
%

\begin{proof} Let $\tilde{\star}$ be a multiplicative Lie algebra structure on $G$.  By Remark \ref{direct}, it is sufficient to show that $H$ is an ideal of $G$ and there is only trivial bilinear map  $ h $ from $ K\times K $  to $ H.$ 
	
Let $a\in H$ and $g \in G$. Then 
 $1 = a^m \tilde{\star} g = (a\tilde{\star} g)^m.$ Suppose $a\tilde{\star} g = hy, $ where $h \in H $ and $y\in K.$ Since $H \subseteq Z(G), 1  = (a\tilde{\star} g)^m = h^my^m = y^m.$ This implies that , $y = 1.$ Hence, $a\tilde{\star} g \in H , \forall g\in G.$ 
 
 Suppose $x,y \in K$ and order of $y$ is $k.$ Then $h(x ,y^k )= 0 = h(x ,y )^k.$ Since $(m, k) = 1 $, we have $h(x,y) = 0.$  Now, it is easy to see that $h(a, b) = 0$ for all $a, b\in  K $.    
\end{proof}

\begin{example} Let $D_p$ be the dihedral group of order $2p$ with multiplicative Lie algebra structure $  \star. $ Suppose $G = \Z_p \times D_p.$ Then by Remark \ref{direct}, we have a  multiplicative Lie algebra structure $\tilde{\star}$ on $G$ induced by $\star,$ and maps $\Gamma$ and $h,$ and it is defined as $$(h,x)\tilde{\star} (k,y) = ( \Gamma_x(k)\Gamma_y(h^{-1})h(x ,y ), x \star y).$$ 
 It is clear that $h$ is  a bilinear map.  Suppose $x, y\in D_p,$ where order of $y$ is $2.$ Then $h(x ,y^2 )= 0 = h(x ,y )^2.$ Since $\Z_p$ has no element of order $2$, we have $h(x,y)=0$. Now, it is easy to see that $h(a, b) = 0$ for all $a, b\in  D_p$.
 
 Since there is only trivial homomorphism  $D_p \to End(\Z_p)\cong \Z_p,$ $\Gamma$ is trivial. Hence,  $(h,x)\tilde{\star} (k,y) = (0 , x \star y).$ Since $D_p$ has only two multiplicative Lie algebra structures, $G$ has also two multiplicative Lie algebra structures for which $\Z_p$ is ideal.
\end{example}

We already know  that the symmetric group $S_3$ has two distinct multiplicative Lie algebra structure \cite{MS}. The following example give another method to compute the same with the help of Theorem \ref{semi-direct}.  
  
\begin{example} Let $G = \Z_3 \rtimes_\sigma \Z_2$ and $\tilde{\star}$ be a non-trivial multiplicative Lie algebra structure on $G$, where $\sigma: \Z_2 \to Aut(\Z_3)$ is non-trivial group homomorphism.  Since $\Z_3$ is the only proper normal subgroup of $G$, $G\tilde{\star} G = \Z_3$. Now, by Theorem \ref{semi-direct},  $\tilde{\star}$ determined by a multiplicative Lie algebra structure $\star $ on $\Z_2,$ and maps $\Gamma$ and $h.$ Since $\Z_2$ has only trivial multiplicative Lie algebra structures,  
		$$(h,x)\tilde{\star} (k,y) = (\Gamma_x(k)\Gamma_y(h^{-1}))h(x ,y ), 0).$$
It is easy to see that $h(x,y)=0$ for all $x,y \in \Z_2.$ Hence, $$(h,x)\tilde{\star} (k,y) = (\Gamma_x(k)\Gamma_y(h^{-1})), 0).$$		
Also, it is easy to see that there is only one  non zero map  $\Gamma:\Z_2 \to End(\Z_3)$ which satisfies $\Gamma_{xy}(h)=\Gamma_x(h)\sigma_x (\Gamma_y(h))$ for every $h \in \Z_3$.		
\end{example}

\begin{example} Consider the group $G = \Z_p \times D_n, (p,2n)=1.$ Then by Proposition \ref{gcd}, every multiplicative Lie algebra structure $\tilde{\star}$ on $G$ is determined by a multiplicative Lie algebra structure $\star$ on $D_n$ and map $\Gamma$. Since there is only trivial homomorphism $D_n \to End(\Z_p)\cong \Z_p,$  $\Gamma$ is trivial. Therefore,  	$(h,x)\tilde{\star} (k,y) = ( 0, x \star y).$
	 
Now, by Theorem 2.5 of \cite{MS},  $D_n$ has $\tau(n)$ multiplicative Lie algebra structures. So, $G$ has also $\tau(n)$ multiplicative Lie algebra structures for which $\Z_p$ is ideal.
\end{example}

\begin{example}
	
 Let $Q_n$ be the quaternion group of order $4n.$ Suppose $G = \Z_p \times Q_n, (p, 4n) = 1$. Then by Proposition \ref{gcd}, every multiplicative Lie algebra structure $\tilde{\star}$ on $G$ is determined by a multiplicative Lie algebra structure $\star$ on $Q_n$ and map $\Gamma$. Therefore, $(h,x)\tilde{\star} (k,y) = ( \Gamma_x(k)\Gamma_y(h^{-1}), x \star y).$
  
 Since there is only trivial homomorphism  $Q_n \to End(\Z_p)\cong \Z_p,$ $\Gamma$ is trivial. Hence,  $(h,x)\tilde{\star} (k,y) = (0 , x \star y).$ Now,  by Theorem 2.5 of \cite{MS},  $Q_n$ has $\tau(n)$ multiplicative Lie algebra structures. So, $G$ has also $\tau(n)$ multiplicative Lie algebra structures for which $\Z_p$ is ideal.

\end{example}

\begin{example}Let $D_4 = \langle a, b \mid a^2 = 1 = b^4 = 1, ab = b^{-1}a\rangle$ be the dihedral group of order $8$ with  multiplicative Lie algebra structure $\star$.
Suppose $G = \Z_4 \times D_4.$ Let $\Gamma: D_4 \to End(\Z_4) = \{\tilde{0}, \tilde{1}, \tilde{2}, \tilde{3}\}$  and $h: D_4 \times D_4 \to \Z_4$ be maps satisfying all  conditions given in Remark \ref{direct}. Hence,  we have a  multiplicative Lie algebra structure $\tilde{\star}$ on $G$  defined as  	$$(h,x)\tilde{\star} (k,y) = ( \Gamma_x(k)\Gamma_y(h^{-1})h(x ,y ), x \star y).$$  
Suppose $x, y\in D_4,$ where order of $y$ is $2.$ Then $h(x ,y^2 )= \bar{0} = h(x ,y )^2, ~\text{order of }~ h(x ,y )$ is either $1 \ \text{or} \ 2. $ Hence, there are only two bilinear maps, one is trivial and the other one is defined by $h(a,b)= \bar{2}$.  Also, there are four group homomorphism  
$\Gamma $ from $D_4$ to $End(\Z_4) = \{\tilde{0}, \tilde{1}, \tilde{2}, \tilde{3}\} $ defined by
\begin{enumerate}
\item  $ \Gamma_{a} = \tilde{0}  $ and  $ \Gamma_{b} = \tilde{0}  $
		
\item $ \Gamma_{a} = \tilde{2}  $ and  $ \Gamma_{b} = \tilde{0}  $
		
\item $ \Gamma_{a} = \tilde{0}  $ and  $ \Gamma_{b} = \tilde{2}  $
		
\item $ \Gamma_{a} = \tilde{2}  $ and  $ \Gamma_{b} = \tilde{2}  $
\end{enumerate}	

We know that $D_4$ has three distinct multiplicative Lie algebra structures defined as $ a \star b = 1, a \star b = b \ \text{and} \ a \star b = b^2 = [a , b]  $ (Theorem 2.5, \cite{MS}).
	
\textbf{Case I:} For $ a \star b = 1,$ it is easy to see that every pair of $(\Gamma, h)$ satisfies all  the conditions given in Remark \ref{direct}, where $\Gamma: D_4 \to End(\Z_4)$ is a group homomorphism  and $h: D_4 \times D_4 \to \Z_4$ is a bilinear map.

If $\tilde{\star}$ is non trivial, then $G \tilde{\star} G \cong \Z_2 . $
	
	\textbf{Case II:} For $ a \star b = b,$ there are two multiplicative Lie algebra homomorphism  
	$\Gamma $ from $D_4$ to $End(\Z_4) = \{\tilde{0}, \tilde{1}, \tilde{2}, \tilde{3}\} $ given by
	\begin{enumerate}
		\item  $ \Gamma_{a} = \tilde{0}  $ and  $ \Gamma_{b} = \tilde{0}  $
		
		\item $ \Gamma_{a} = \tilde{2}  $ and  $ \Gamma_{b} = \tilde{0}  $
	\end{enumerate}	
Also, these two multiplicative Lie algebra homomorphisms satisfy all the conditions given in Remark \ref{direct} with every bilinear map $h: D_4 \times D_4 \to \Z_4$. 
	
In this case, $G \tilde{\star} G \cong \Z_2 \times \Z_4 ~\text{or}~  \Z_4$
	
	\textbf{Case III:} Similarly, for $ a \star b = b^2,$ it is easy to see that every pair of $(\Gamma, h)$ satisfies all  the conditions given in Remark \ref{direct}, where $\Gamma: D_4 \to End(\Z_4)$ is a group homomorphism  and $h: K \times K \to H$ is a bilinear map.

	In this case, $G \tilde{\star} G \cong \Z_2 \times \Z_2 ~\text{or}~  \Z_2$.
	
\end{example}


\noindent{\bf Acknowledgement:}
The first named author sincerely thanks IIIT Allahabad and Ministry of Education, Government of India for providing institute fellowship. The second named author sincerely thanks IIIT Allahabad and University grant commission (UGC), Govt. of India, New Delhi for research fellowship.

\end{document}